\renewcommand{\mod}[1]{\allowbreak \if@display \mkern 8mu \else 
\mkern 5mu\fi {\operator@font mod}\,\,#1}
\newcommand{\bc}{\mathbb C}
\newcommand{\bn}{\mathbb N}
\newcommand{\bq}{\mathbb Q}
\newcommand{\br}{\mathbb R}
\newcommand{\bz}{\mathbb Z}
\newcommand{\E}{\mathcal E}
\newcommand{\wf}{\widetilde{f}}
\newcommand{\wH}{\widetilde{H}}
\DeclareMathOperator{\Aut}{Aut}
\DeclareMathOperator{\Hom}{Hom}
\DeclareMathOperator{\rk}{rk}
\DeclareMathOperator{\ch}{ch}
\DeclareMathOperator{\td}{td}
\DeclareMathOperator{\Tyu}{Tyu}
\newtheorem{theorem}{Theorem}[section]
\newtheorem{corollary}[theorem]{Corollary}
\newtheorem{problem}[theorem]{Problem}
\begin{document}
\title{Self-correspondences of K3 surfaces \\
via moduli of sheaves and arithmetic hyperbolic reflection groups}
\date{}
\author{Viacheslav V. Nikulin\footnote{Supported by EPSRC grant
EP/D061997/1}} \maketitle

\begin{abstract}
In series of our papers with Carlo Madonna (2002--2008) we described 
self-correspondences of K3 surfaces over $\bc$ 
via moduli of sheaves with primitive isotropic Mukai vector for 
Picard number one or two of the K3 surfaces. 

Here we give a natural and functorial answer to   
the same problem for arbitrary Picard number. 

As an application, we characterise in terms of self-correspondences 
via moduli of sheaves K3 surfaces  with reflective Picard lattices, 
when the automorphism group of the lattice is generated by reflections 
up to finite index. It is known since 1981 that the number of 
reflective hyperbolic lattices is, in essential, finite. 
We also formulate several natural unsolved related problems.
\end{abstract}

\vskip1cm \centerline{\it To the memory of Vasily Alexeevich Iskovskikh} 
\vskip1cm

\section{Introduction}
\label{introduction}

In series of our papers with Carlo Madonna
\cite{Mad-Nik1} -- \cite{Mad-Nik3} and
\cite{Nik7} -- \cite{Nik9}, we considered self-correspondences of K3
surfaces (over $\bc$) via moduli of sheaves.

They are determined by primitive isotropic Mukai vectors
$v=(r,H,s)$ with $r \in \bn$, $s\in \bz$ and $H\in N(X)$ where
$N(X)$ is the Picard lattice of $X$, and $H^2=2rs$.
Due to Mukai \cite{Muk1}, \cite{Muk2} (see also Yoshioka \cite{Yoshioka1}),
the moduli space $Y=M_X(v)$ of stable (with respect to some ample element
$H^\prime\in N(X)$)  coherent sheaves on $X$ of rank $r$ with
first Chern class $H$ and Euler characteristic $r+s$ is again a
K3-surface. Chern class of the quasi-universal sheaf $\E$ on
$X\times Y$ gives some 2-dimensional algebraic cycle on $X\times Y$
and can be considered as a correspondence between $X$ and $Y$.
If $Y\cong X$, it can be considered as a self-correspondence of $X$
via moduli of sheaves with Mukai vector $v$.

In our papers above, we
studied when $Y\cong X$, and we gave a complete answer if the Picard
number of $X$ is 1 or 2, and $X$ is general for its
Picard lattice. We showed that all of them can be
reduced to so called Tyurin's isomorphisms $Y=M_X(v)\cong X$ when
$v=(\pm H^2/2,H,\pm 1)$ where $H\in N(X)$ has $\pm H^2>0$.

When $Y\cong X$, using Mukai's results,
in \cite{Nik9} we considered the action of
the self-correspondences of $X$ via moduli of sheaves on $H^2(X,\bq)$.
According to \cite{Nik9} and our considerations in this paper
(see Sec. \ref{corresp}), the action of Tyurin's isomorphism is
naturally given by the reflection $s_H$ with respect to $H$
(see \eqref{sH}), up
to the action of the group $\{\pm 1\}W^{(-2)}(N(X))$ where
$W^{(-2)}(N(X))$ is  generated by reflections in elements $\delta \in N(X)$
with $\delta^2=-2$ (they are
called $(-2)$ roots of $N(X)$). We call such
self-correspondence of $X$ as Tyurin's self-correspondence $Tyu(H)$ of $X$.

One can associate to $(-2)$ root $\delta \in N(X)$
a self-correspondences of $X$
which is  given by 2-dimensional cycle $\Delta+E\times E$ on $X\times X$
where $\Delta$ is the diagonal, and $E$ is an effective curve from
$|\pm \delta |$, and the action of this self-correspondence is given by the
reflection $s_\delta$ with respect to $\delta$.

Using well-known fact that the automorphism group of a rational quadratic 
form is generated by reflections, we obtain from here the main result of 
the paper: {\it any self-correspondence of $X$ defined by a primitive 
isotropic Mukai vector, and any their composition is numerically equivalent to 
composition of self-correspondences of $X$ defined by $(-2)$ roots, 
and Tyurin's self-correspondences up to finite index} (see Theorem 
\ref{thcorrcomptyurin} and Corollary \ref{corcorrcomptyurin} below for 
exact formulation). 

\medskip

It is also interesting to consider self-correspondences of $X$ with 
integral action in Picard lattice $N(X)$. One can consider them as 
analogous to automorphisms. See Sect. \ref{latt}.

We obtain that the action in $N(X)$ of self-correspondences of $X$
which are given by all $(-2)$ roots of $N(X)$ and by all
Tyurin's self correspondences $Tyu(H)$ with $H^2<0$ and
integral action in $N(X)$ (then $H\in N(X)$ is proportional to
a primitive root of $N(X)$) generate the group $\{\pm 1\}W(N(X))$
up to $\{\pm 1\}$. Here $W(N(X))$ is the reflection group of the
hyperbolic lattice $N(X)$; it is generated by reflections in all
roots of $N(X)$ with negative square. In particular, actions of
these self-correspondences in $N(X)$ generate a subgroup of finite
index in $O(N(X))$ if and only if $[O(N(X)):W(N(X))]<\infty$.
Such hyperbolic lattices $N(X)$ are called {\em reflective}.
Thus, we characterize K3 surfaces with reflective Picard lattices in 
terms of their self-correspondences.

Classification of reflective hyperbolic lattices is the subject
of the theory of arithmetic hyperbolic reflection groups.
In \cite{Nik3}, \cite{Nik4} and \cite{Vin1}, \cite{Vin2}, it was shown
that the number of similarity classes of reflective hyperbolic lattices
of rank at least 3 is finite.

Thus, for $\rk N(X)\ge 3$, action in $N(X)$ of
compositions of correspondences of $X$
defined by $(-2)$ roots of $X$ and by Tyurin's self-correspondences
$Tyu(H)$ with negative $H^2$ and integral action in $N(X)$ generate
a subgroup of infinite index in $O(N(X))$, if $N(X)$ is different from
finite number of similarity classes of hyperbolic lattices.

This results relate two different topics: Self-correspondences of K3
surfaces and Arithmetic hyperbolic reflection groups.

\section{Self-correspondences of K3 surfaces via moduli of sheaves}
\label{corresp}

In series of our papers \cite{Mad-Nik1} -- \cite{Mad-Nik3} (with 
Carlo Madonna) and \cite{Nik7} -- \cite{Nik9}, 
we considered self-correspondences of K3
surfaces (over $\bc$) via moduli of sheaves with primitive isotropic 
Mukai vectors. For Picard numbers one or two of K3 surface we described 
these self-correspondences in big details. 

Here we want to give a natural and functorial answer to this problem 
for arbitrary Picard number. Perhaps, the natural answer will be 
{\it to give some natural generators for these self-correspondences such 
that all other self-correspondences of this type will be compositions 
of these generators.} Below, we follow this idea.

We consider primitive isotropic Mukai vectors
$v=(r,H,s)$ where $r \in \bn$, $s\in \bz$ and $H\in N(X)$ where
$N(X)$ is the Picard lattice of $X$, and $H^2=2rs$.
Due to Mukai \cite{Muk1}, \cite{Muk2} (see also Yoshioka \cite{Yoshioka1}),
the moduli space $Y=M_X(v)$ of stable (with respect to some ample element
$H^\prime\in N(X)$)  coherent sheaves on $X$ of rank $r$ with
first Chern class $H$ and Euler characteristic $r+s$ is again a
K3-surface.

Write $\pi_X$, $\pi_Y$ for the projections of
$X\times Y$ to $X$ and $Y$. By Mukai \cite[Theorem 1.5]{Muk2}, the
algebraic cycle
\begin{equation}
Z_{\E}=(\pi^*_X \sqrt{\td_X})\cdot \ch(\E)\cdot
(\pi^*_Y\sqrt{\td_Y})/\sigma (\E)
\label{Mukaicycle}
\end{equation}
arising from the quasi-universal sheaf $\E$ on $X\times Y$ defines an
isomorphism of the full cohomology groups
\begin{equation}
\label{Mukaicycleisom}
f_{Z_\E}\colon H^*(X,\bq)\to H^* (Y,\bq),\quad
t\mapsto {\pi_{Y}}_*(Z_\E\cdot\pi^*_X t)
\end{equation}
with their Hodge structures (see \cite[Theorem 1.5]{Muk2}, for
details). Moreover, according to Mukai, it defines an isomorphism
of lattices (an isometry)
\begin{equation}
\label{Mukaiisom}
f_{Z_\E}\colon v^\perp \to H^4(Y,\bz)\oplus
H^2(Y,\bz)
\end{equation}
where $f_{Z_\E}(v)=w\in H^4(Y,\bz)$ is the fundamental cocycle, and the
orthogonal complement $v^\perp$ is taken in the Mukai lattice
$\wH(X,\bz)$ (with Mukai pairing $v^2=-2rs+H^2$). The Mukai's cycle
\eqref{Mukaicycle} and this construction can be viewed as a 
correspondence of $X$ and $Y$ via moduli of sheaves defined
by the primitive isotropic Mukai vector $v$. If $Y$ is isomorphic to $X$,
this defines a self-correspondence of $X$. 

Using this Mukai's
construction, in \cite{Nik9}, we considered the action of
the self-correspondence on $H^2(X,\bq)$. Below we recall this
construction. See \cite{Nik9} for details.

Composing $f_{Z_\E}$ with the projection $\pi\colon
H^4(Y,\bz)\oplus H^2(Y,\bz)\to H^2(Y,\bz)$ gives an embedding of
lattices
\begin{equation}
\label{Mukaiembed}
\pi\cdot f_{Z_\E}\colon H^\perp_{H^2(X,\bz)}\to
H^2(Y,\bz)
\end{equation}
that extends to an isometry
\begin{equation}
\label{isometryH2}
\wf_{Z_\E}\colon H^2(X,\bq)\to H^2(Y,\bq)
\end{equation}
of quadratic forms over $\bq$ by Witt's Theorem. If $H^2=0$, this
extension is unique.

If $H^2\not=0$, there are two such extensions, differing by the reflection 
\begin{equation}
\label{sH1}
s_H: x\to x- \frac{2(x\cdot H )H}{H^2} ,\ \ x\in H^2(X,\bq).
\end{equation}
where $s_H$ is identity on the 
orthogonal complement $H^\perp$, and $s_H(H)=-H$. 
We agree to take
\begin{equation}
\label{isometryH2h}
\wf_{Z_\E}(H)=(-r,0,s)\mod \bz v
\end{equation}
(see \cite{Nik9} for details). Another possibility is to consider both 
such extensions, their difference will be by the reflection $s_H$. 

The Hodge isometry \eqref{isometryH2} can be viewed as a minor
modification of Mukai's algebraic cycle \eqref{Mukaicycle} to obtain an
isometry in second cohomology. Clearly, it is defined by some
2-dimensional algebraic cycle on $X\times Y$,
because it only changes the Mukai isomorphism \eqref{Mukaicycleisom}
in the algebraic part.

Now let us assume that $Y=X$, that is the moduli of sheaves over $X$ are
parametrized by $X$ itself. Then the Hodge isometry \eqref{isometryH2}
defines the Hodge automorphism
\begin{equation}
\label{isometryH2X}
\wf_{Z_\E}\colon H^2(X,\bq)\to H^2(X,\bq)
\end{equation}
of quadratic forms over $\bq$ which gives an automorphism of the
transcendental periods $(T(X),H^{2,0}(X))$ of $X$.
Here $T(X)=(N(X))^\perp_{H^2(X,\bz)}$ is the transcendental lattice
of the K3 surface $X$. The automorphism \eqref{isometryH2X} is
called {\em the action of the self-correspondence of $X$ via moduli
of sheaves $\E$ with the primitive isotropic Mukai vector $v=(r,H,s)$}.
Changing the parametrization of sheaves by
an automorphism of $X$, one changes the action by the action of
the automorphism on $H^2(X,\bz)$ (and the correspondence by its composition
with the graph of the automorphism). Since non-trivial automorphisms of a K3
surface $X$ act non-trivially on $H^2(X,\bz)$, the exact choice of
the parametrization and the self-correspondence of $X$ via moduli of
sheaves with Mukai vector $v$ is defined by its action. We always choose
the simplest action that arrives at the most general K3 surfaces with
the given type of Mukai vector. See \cite{Nik9} and also
\cite{Mad-Nik1}--\cite{Mad-Nik3}, \cite{Nik7}, \cite{Nik8}
about this subject. We discuss this choice below for Tyurin's
isomorphisms.

We note that by Global Torelli Theorem for K3 surfaces \cite{PS},
the K3 surfaces $Y$ and $X$ are isomorphic if and only if two sublattices 
$H^2(X,\bz)$ and $\wf_{Z_\E}^{-1}(H^2(Y,\bz)$) for the defined action 
$\wf_{Z_\E}$ are conjugate by a rational automorphism $f\in O(H^2(X,\bq))$, 
that is $f^{-1}(H^2(X,\bz))=\wf_{Z_\E}^{-1}(H^2(Y,\bz))$, such that   
$(f\otimes \bc) (H^{2,0}(X))= H^{2,0}(X)$. Thus, the defined action 
$\wf_{Z_\E}$ shows when $Y\cong X$, and it is natural to consider.  

It is known that $Y\cong X$ for the Mukai vectors
\begin{equation}
\label{Tyurinv}
v=(\pm H^2/2, H,\pm 1)
\end{equation}
where $H\in N(X)$ and $\pm H^2>0$. We call this isomorphism as
{\em Tyurin's isomorphism}.
Tyurin \cite{Tyurin1} --- \cite{Tyurin3} described it geometrically
for general K3 surfaces $X$.
In \cite{Nik9} we calculated the action of the Tyurin's isomorphism,
and the result is that {\it there exists a unique choice of the identification 
$Y=M_X(v)=X$ such that the action \eqref{isometryH2X} satisfies} 
\begin{equation}
\label{Tyurinac}
\wf_{Z_\E}=s_H\mod \{\pm 1\}W^{(-2)}(N(X)). 
\end{equation}
Here 
$s_H$ is the reflection \eqref{sH1} with respect to $H$, that is $s_H$ 
is given by the formula  
\begin{equation}
\label{sH}
s_H: x\to x- \frac{2(x\cdot H )H}{H^2} ,\ \ x\in H^2(X,\bq),
\end{equation}
and $W^{(-2)}(N(X))\subset O(H^2(X,\bz))$ is generated by reflections 
$$
s_\delta:x\to x+(x\cdot \delta)\delta,\ \ x\in H^2(X,\bz),
$$
with respect to elements $\delta\in N(X)$ with 
square $\delta^2=-2$ (they are called reflections in $(-2)$ roots). 
  
More exactly, this is the action if $X$ is general with the
Mukai vector $v$, that is $N(X)\otimes \bq=\bq H$ and
$\Aut (T(X), H^{2,0}(X))=\pm 1$. Actually, one of $\pm s_H$ is
the only possible action if $X$ is general.
Thus, it coincides with Tyurin's geometric definition.
For an arbitrary $X$, we can take the action from the coset
$\{\pm 1\} W^{(-2)}(N(X))s_H$, and the choice is unique. Here we use Global
Torelli Theorem for K3 surfaces, \cite{PS}.
Geometrically this means that we choose the action which is
a specialization of the action from a general K3 surface with
the Mukai vector of the type $v$.
The self-correspondence of $X$ defined by moduli of sheaves on $X$
with Mukai vector $v$ of Tyurin's  type \eqref{Tyurinv}
and with action \eqref{Tyurinac} we call as
{\em Tyurin's self-correspondence $\Tyu(H)$ of X via moduli of sheaves
defined by an element $H\in N(X)$ with $H^2\not=0$.}

Assume that $\delta \in N(X)$ has $\delta^2=-2$.
By the Riemann--Roch theorem for K3 surfaces, $\pm \delta$ contains an
effective curve $E$. If $\Delta\subset X\times X$ is the diagonal, the
effective 2-dimensional algebraic cycle $\Delta+E\times E\subset X\times
X$ acts as the reflection $s_\delta$ in $H^2(X,\bz)$ (I learnt this from
Mukai \cite{Muk5}). We call this self-correspondences of $X$ as
{\em defined by $(-2)$ roots of $N(X)$.} Actions of
their compositions give the group $W^{(-2)}(N(X))$.

Using  \eqref{Tyurinac}, we obtain the following fundamental relation between
self-corres\-pondences of $X$ via moduli of sheaves with primitive 
isotropic Mukai vectors and Tyurin's
self-correspondences. 

We say that two correspondences of K3
surfaces $X$ and $Y_1$, and $X$ and $Y_2$ 
(that is 2-dimensional algebraic cycles on
$X\times Y_1$ and $X\times Y_2$) 
are {\em numerically equivalent} if their actions
$f_1:H^2(X,\bq)\to H^2(Y_1,\bq)$ and $f_2:H^2(X,\bq)\to H^2(Y_2,\bq)$ 
in second cohomology are equal: there exists an 
isomorphism $g:Y_1\to Y_2$ of K3 surfaces such that $f_1=g^\ast f_2$. 

We have the following statement which was indirectly formulated and 
proved in \cite{Nik9}. This is the main result of the paper. 

\begin{theorem}
\label{thcorrcomptyurin} Let $X$ be a K3 surface over $\bc$. It is 
known (see \cite{Nik1} and \cite{Nik5}) that the automorphism group 
$\Aut (T(X),H^{2,0}(X))$ of the transcendental periods of $X$ is 
a finite cyclic group of an order $n$ where $\phi(n)|\rk T(X)$ 
(here $\phi(n)$ is the Euler function). 
Then, obviously, there exists a finite number ($< n/2$) 
of primitive isotropic Mukai vectors $v_1,\dots,v_k$ giving 
self-correspondences of $X$ such that  
compositions of these self-correspondences give all possible actions  
on the transcendental periods $(T(X),H^{2,0}(X))$ up to $\pm 1$.  

Let $v=(r,H,s)$ be a primitive isotropic
Mukai vector on $X$. 
Then $Y=M_X(v)$ is isomorphic to $X$ if and only if the correspondence
between $X$ and $Y$ defined by $v$ with the action
$$
\wf_{Z_\E}\colon H^2(X,\bq)\to H^2(Y,\bq)
$$
given in \eqref{isometryH2} is numerically equivalent, up to $\pm
1$, to a composition of self-correspondences defined by $(-2)$ roots, 
Tyurin's self-correspondences, and self-correspondences defined by 
$v_1,\dots,v_k$. 

In particular, any self-correspondence of $X$ via moduli of sheaves 
with primitive isotropic Mukai vector and any their composition is 
numerically equivalent up to $\{\pm 1\}$ 
to composition of self-correspondences defined by $(-2)$ roots, 
Tyurin's self-correspondences and self-correspondences defined by 
$v_1,\dots ,v_k$.  
\end{theorem}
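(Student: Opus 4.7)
The proof will combine three ingredients already set up in the paper: the formula \eqref{Tyurinac} identifying the Tyurin self-correspondence with $s_H$ modulo $\{\pm 1\}W^{(-2)}(N(X))$; the identification of the self-correspondence defined by a $(-2)$ root $\delta$ with the reflection $s_\delta$; and the classical Cartan--Dieudonn\'e theorem (cited by the author as ``the automorphism group of a rational quadratic form is generated by reflections''). The Global Torelli criterion recalled just before \eqref{Tyurinv} converts agreement of rational actions into the isomorphism $Y\cong X$.

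\textbf{Construction of $v_1,\dots,v_k$ and the only-if direction.} The action on transcendental periods of any self-correspondence via moduli of sheaves lies in the finite cyclic group $G=\Aut(T(X),H^{2,0}(X))$ of order $n$, since Mukai's integral isometry $f_{Z_\E}\colon v^\perp\to H^4(Y,\bz)\oplus H^2(Y,\bz)$ carries $T(X)$ onto $T(Y)$. Modulo $\pm 1$ these actions land in a cyclic subgroup of $G/\{\pm 1\}$, whose order is less than $n/2$; pick primitive isotropic Mukai vectors $v_1,\dots,v_k$ whose self-correspondences realize a generating set. Now assume $Y\cong X$, so that $\wf_{Z_\E}$ is a rational Hodge isometry of $H^2(X,\bq)$ respecting the orthogonal splitting $N(X)\otimes\bq \oplus T(X)\otimes\bq$. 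Composing with suitable iterates of the actions of $v_1,\dots,v_k$ together with a global $\pm 1$ trivializes the restriction to $T(X)\otimes\bq$. Cartan--Dieudonn\'e then writes the residual isometry on $N(X)\otimes\bq$ as $s_{H_1}\cdots s_{H_m}$ with $H_i\in N(X)\otimes\bq$ and $H_i^2\ne 0$; since $s_{cH}=s_H$ for any $c\in\bq^\ast$, I may take $H_i\in N(X)$. By \eqref{Tyurinac} each $s_{H_i}$ coincides with the action of $\Tyu(H_i)$ modulo $\{\pm 1\}W^{(-2)}(N(X))$, and the $W^{(-2)}(N(X))$ ambiguity is generated by the reflections $s_\delta$ realized by the explicit self-correspondences $\Delta+E\times E$ attached to $(-2)$ roots.

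\textbf{Converse, the ``in particular'' clause, and main obstacle.} Conversely, any composition of the listed self-correspondences is itself a bona fide algebraic self-correspondence of $X$; if its rational action agrees with $\wf_{Z_\E}$ up to $\pm 1$, the Global Torelli criterion recalled before \eqref{Tyurinv} forces $Y\cong X$. The ``in particular'' statement is the only-if direction applied with $Y\cong X$ already in hand, so no new argument is needed. The main obstacle I anticipate is the coset bookkeeping in the only-if direction: iterating \eqref{Tyurinac} across many Tyurin pieces accumulates an ambiguity living in $\{\pm 1\}W^{(-2)}(N(X))$, and one must verify that this accumulated ambiguity is still representable as a composition of $(-2)$-root self-correspondences together with a single overall sign. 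This reduces to the statement that $W^{(-2)}(N(X))$ is generated by geometrically realized reflections, which is built into the construction but must be invoked carefully together with the fact that each generator comes from an effective curve $E\in|\pm\delta|$ guaranteed by Riemann--Roch.
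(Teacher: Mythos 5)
Your proposal is correct and follows essentially the same route as the paper's own proof: after reducing the transcendental part by $\pm 1$ and the finitely many $v_i$, you invoke Cartan--Dieudonn\'e (the paper cites Cassels) to write the residual isometry of $N(X)\otimes\bq$ as a product of reflections $s_{H_i}$ with $H_i\in N(X)$, realize each via \eqref{Tyurinac} by Tyurin self-correspondences with the $\{\pm 1\}W^{(-2)}(N(X))$ ambiguity absorbed by the $(-2)$-root correspondences, and obtain the converse from the Global Torelli criterion. Your explicit handling of the coset bookkeeping and of the identification $T(X)\to T(Y)$ only spells out what the paper leaves implicit.
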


For a general (for its Picard lattice) K3 surface $X$ we, obviously, 
don't need self-correspondences defined by $v_1,\dots, v_k$, and 
we obtain the Corollary below. 

\begin{corollary}
\label{corcorrcomptyurin} Let $X$ be a K3 surface over $\bc$ which is 
general for its Picard lattice that is $\Aut (T(X),
H^{2,0}(X))=\{\pm 1\}$. Let $v=(r,H,s)$ be a primitive isotropic
Mukai vector on $X$.

Then $Y=M_X(v)$ is isomorphic to $X$ if and only if the correspondence
between $X$ and $Y$ defined by $v$ with the action
$$
\wf_{Z_\E}\colon H^2(X,\bq)\to H^2(Y,\bq)
$$
given in \eqref{isometryH2} is numerically equivalent, up to $\pm
1$, to a composition of correspondences defined by $(-2)$ roots, 
and Tyurin's self-correspondences.

In particular, any self-correspondence of $X$ via moduli of sheaves 
with primitive isotropic Mukai vector and any their composition is 
numerically equivalent up to $\{\pm 1\}$ 
to composition of self-correspondences defined by $(-2)$ roots, and  
Tyurin's self-correspondences.  
\end{corollary}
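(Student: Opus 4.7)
The backward implication is immediate: by the definition of numerical equivalence, a numerical equivalence (up to $\pm 1$) between the correspondence on $X\times Y$ and a composition of self-correspondences of $X$ --- which lives on $X\times X$ --- already exhibits an isomorphism $g\colon Y\to X$.

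For the forward direction, suppose $Y\cong X$ via some isomorphism $g$. I would form the rational Hodge isometry $\phi := g^\ast\circ\wf_{Z_\E}\colon H^2(X,\bq)\to H^2(X,\bq)$. Since $\phi$ preserves $T(X)\otimes\bq=(N(X)\otimes\bq)^\perp$ and the line $H^{2,0}(X)$, its restriction to $T(X)\otimes\bq$ lies in $\Aut(T(X),H^{2,0}(X))=\{\pm 1\}$. Replacing $\phi$ by $-\phi$ if necessary (permissible since the conclusion is only up to $\pm 1$), I may assume $\phi|_{T(X)\otimes\bq}=\mathrm{id}$; then $\phi$ is determined by $\sigma:=\phi|_{N(X)\otimes\bq}\in O(N(X)\otimes\bq)$, extended by the identity on the transcendental part.

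Next I would invoke the Cartan--Dieudonn\'e theorem for the non-degenerate quadratic space $N(X)\otimes\bq$: the orthogonal group is generated by reflections in anisotropic vectors, so $\sigma = s_{h_1}\cdots s_{h_m}$ with $h_i\in N(X)\otimes\bq$ and $h_i^2\neq 0$. Rescaling by $\bq^\ast$ (which does not change $s_{h_i}$), I may take $h_i\in N(X)$. By the identity \eqref{Tyurinac}, the action of Tyurin's self-correspondence $\Tyu(h_i)$ equals $s_{h_i}$ modulo $\{\pm 1\}W^{(-2)}(N(X))$, so the composite $\Tyu(h_1)\circ\cdots\circ\Tyu(h_m)$ acts as $\sigma$ modulo $\{\pm 1\}W^{(-2)}(N(X))$. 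Post-composing with an appropriate product of self-correspondences defined by $(-2)$ roots of $N(X)$ --- which, as recalled just before the theorem, realize all of $W^{(-2)}(N(X))$ --- removes the $W^{(-2)}(N(X))$-ambiguity and leaves an action equal to $\phi$ up to $\pm 1$. The resulting composition is therefore numerically equivalent (up to $\pm 1$) to the original correspondence.

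The substantive input is Cartan--Dieudonn\'e over $\bq$; everything else is assembly of what is prepared in Section~\ref{corresp}, namely the Hodge-isometry property of $\wf_{Z_\E}$, the formula \eqref{Tyurinac} for Tyurin's action, and the realization of $W^{(-2)}(N(X))$ by $(-2)$-root self-correspondences. The only delicate step I foresee is tracking signs carefully enough to see that the $\pm 1$ freedom on the self-correspondence side exactly absorbs the $\{\pm 1\}=\Aut(T(X),H^{2,0}(X))$ ambiguity on the transcendental side --- this is the same bookkeeping already carried out in \cite{Nik9}.
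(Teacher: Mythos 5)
Your argument is correct and follows essentially the same route as the paper's own proof: after using the isomorphism $Y\cong X$ to reduce to a rational Hodge isometry that is $\pm 1$ on the transcendental part, you invoke generation of $O(N(X)\otimes\bq)$ by reflections (Cartan--Dieudonn\'e, which is the same fact the paper cites from Cassels) and realize the reflections $s_{h_i}$, $h_i\in N(X)$, by Tyurin's self-correspondences via \eqref{Tyurinac}, with $(-2)$-root correspondences absorbing the $W^{(-2)}(N(X))$ ambiguity and the sign absorbing the $\{\pm 1\}$ ambiguity, the converse being immediate from the definition of numerical equivalence. One small bookkeeping caveat: since $W^{(-2)}(N(X))$ is not normalized by the rational reflections $s_{h_i}$, the plain composite $\Tyu(h_1)\circ\cdots\circ\Tyu(h_m)$ need not act as $\sigma$ modulo $\{\pm 1\}W^{(-2)}(N(X))$, so the $(-2)$-root correspondences should be interleaved between the Tyurin correspondences (writing $s_{h_i}=\pm w_i^{-1}a_i$ with $a_i$ the action of $\Tyu(h_i)$ and $w_i\in W^{(-2)}(N(X))$) rather than post-composed in one batch --- a trivial adjustment fully allowed by the statement being proved.
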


\begin{proof} Using $\wf_{Z_\E}$, let us identify $H^2(Y,\bz)$
with a sublattice $(\wf_{Z_\E})^{-1}(H^2(Y,\bz))$ in
$H^2(X,\bq)$. Then $\wf_{Z_\E}$ will be identified with the
identity of $H^2(X,\bq)$.

By the isomorphism \eqref{Mukaiisom} and the embedding
\eqref{Mukaiembed} of Hodge structures, we obtain an embedding of
the transcendental periods $(T(X),H^{2,0}(X))\subset (T(Y),$
$H^{2,0}(Y))$. If $Y\cong X$, we must then have
\begin{equation}
\label{isomtranper}
(T(X),H^{2,0}(X))=(T(Y),H^{2,0}(Y)).
\end{equation}
This identification of transcendental periods of $X$ and $Y$ is unique up to
multiplication by $\{\pm 1\}$ and by composition of self-correspondences 
defined by $v_1,\dots,v_k$. 

By Global Torelli Theorem for K3 surfaces \cite{PS}, $Y\cong X$ if
and only if the identification \eqref{isomtranper} can be extended
to an isomorphism of the sublattices $H^2(X,\bz)$ and $H^2(Y,\bz)$
of $H^2(X,\bq)$. Equivalently, there must exist $f\in
O(H^2(X,\bq))$ such that $f(H^2(X,\bz))=H^2(Y,\bz)$ and
$f|T(X)=\pm 1$. Changing $f$ to $-f$ if necessary, we can assume
that $f|T(X)=1$. Then $f$ is defined by its action in $N(X)\otimes
\bq$. By the well-known result about rational quadratic forms (e.g., see
\cite{Cassels}), such $f$ is a composition of reflections
$f=s_{H_m}\cdots s_{H_1}$ with respect to elements $H_i\in N(X)$
with $(H_i)^2\not=0$.  Note that these reflections give identity
in $T(X)$. Thus, our considerations are reversible.

If $Y=X$, we can consider $f$ as an identification of $H^2(X,\bz)$
with another copy of $H^2(X,\bz)$. Then the action is $f^{-1}=
s_{H_1}\cdots s_{H_m}$. It follows the statement.
\end{proof}

Theorem \ref{thcorrcomptyurin} and Corollary 
\ref{corcorrcomptyurin} show that $(-2)$ roots and Tyurin's 
self-correspondences of $X$ are fundamental for
all self-correspondences of $X$ via moduli of sheaves with primitive 
isotropic Mukai vector.   

The proof of these statements demonstrates an application of the purely 
arithmetic fact that the group of automorphisms of a rational quadratic 
form is generated by reflections. The same is valid for quadratic forms 
over fields of characteristic different from $2$.

These results can be also considered as some general (for any Picard number) 
alternative to our results with Carlo Madonna 
\cite{Mad-Nik1}--\cite{Mad-Nik3}, \cite{Nik6}--\cite{Nik8} about self-correspondences 
of K3 surfaces via moduli of sheaves with primitive isotropic Mukai 
vector which were valid for K3 surfaces with 
Picard number one or two only.

\begin{problem}
\label{probcorrcomptyurin}
Can one replace  in Theorem \ref{thcorrcomptyurin} and 
Corollary  \ref{corcorrcomptyurin} the  
numerical equivalence by some algebraic equivalence of
self-correspondences? 
\end{problem}

\section{K3 surfaces and arithmetic hyperbolic reflection groups}
\label{latt}

\subsection{Reflections and reflective hyperbolic lattices}
\label{refl}
We use the notation and terminology of \cite{Nik2} for lattices,
and their discriminant groups and forms. A {\em lattice} $L$ is a
nondegenerate integral symmetric bilinear form. That is, $L$ is a
free $\bz$-module of a finite rank with a symmetric pairing
$x\cdot y\in \bz$ for $x,\,y\in L$, assumed to be nondegenerate.
We write $x^2=x\cdot x$. The {\it signature} of $L$ is the
signature of the corresponding real form $L\otimes \br$. The
lattice $L$ is called {\em even} if $x^2$ is even for any $x\in
L$. Otherwise, $L$ is called {\em odd}. The {\it determinant} of
$L$ is defined to be $\det L=\det(e_i\cdot e_j)$ where $\{e_i\}$
is some basis of $L$. The lattice $L$ is {\em unimodular} if $\det
L=\pm 1$. The {\em dual lattice} of $L$ is
$L^*=\Hom(L,\,\bz)\subset L\otimes \bq$. The {\em discriminant
group} of $L$ is $A_L=L^*/L$; it has order $|\det L|$, and is
equipped with a {\em discriminant bilinear form} $b_L\colon
A_L\times A_L\to \bq/\bz$ and, if $L$ is even, with a {\em
discriminant quadratic form} $q_L\colon A_L\to \bq/2\bz$. To
define these, we extend the form on $L$ to a form on the dual
lattice $L^*$ with values in $\bq$.

An embedding $M\subset L$ of lattices is called {\em primitive} if
$L/M$ has no torsion. Similarly, a non-zero element $x\in L$ is
called {\em primitive} if $\bz x\subset L$ is a primitive
sublattice.

A non-zero element $\delta\in L$ is called positive, negative, and isotropic
if $\delta^2>0$, $\delta^2<0$, and $\delta^2=0$ respectively.
An element $\delta$ of a lattice $L$ is is called {\em root} if
$\delta^2\not=0$ and $\delta^2|2(\delta\cdot L)$.  For example, $\delta\in L$
with $\delta^2=\pm 2$ is root. It is called {\em $(\pm 2)$ root.}

Further $O(L)$ denotes the full automorphism group of the lattice $L$.
Each root $\delta\in L$ gives
{\em a reflection $s_\delta\in O(L)$ with respect
to $\delta$.} It is given by the formula
\begin{equation}
\label{sdelta}
s_\delta: x\to x- \frac{2(x\cdot \delta )\delta }{\delta^2} ,\ \ x\in L,
\end{equation}
and it is uniquely determined by the properties that
$s_\delta(\delta)=-\delta$ and $s_\delta$ gives identity on the orthogonal
complement $(\delta)^\perp_L$
to $\delta$ in $L$. Two proportional roots define the same reflection.
Thus, considering reflections $s_\delta$, we can restrict to primitive roots.
For a primitive $\delta\in L$ with $\delta^2\not=0$,
the formula \eqref{sdelta} defines an automorphism $s_\delta\in O(L)$ of $L$
if and only if $\delta$ is root. Otherwise, $s_\delta\in O(L\otimes \bq)$
is a rational automorphism of the rational quadratic form $L\otimes \bq$.
The reflection $s_\delta$ is called positive, negative if
$\delta$ is respectively positive, negative.

We denote by $W^{\pm}(L)$, $W^+(L)$, and $W(L)=W^-(L)$
normal subgroups of $O(L)$
generated by reflections in all positive and negative roots,
all positive roots, and all negative roots of $L$ respectively.

We denote by $W^{(-2)}(L)$ the normal subgroup of $O(L)$
generated by reflections in all elements of $L$ with square $-2$ that
is all $(-2)$ roots of $L$.

A lattice $L$ is called {\em hyperbolic} if it has signature $(1,\rho-1)$
where $\rho=\rk L$. A hyperbolic lattice $L$ is called {\em (classically)
reflective, equivalently $(-)$ reflective}
if $W^-(L)$ has finite index in $O(L)$,
that is the automorphism group
$O(L)$ is generated by all negative reflections of $L$, up to finite
index. Similarly, we call a hyperbolic lattice $L$ as
$(\pm)$ reflective, and $(+)$ reflective
if $W^{\pm}(L)$, and $W^+(L)$ has finite index in $O(L)$
respectively.

Further $L(m)$ denotes a lattice obtained by the
multiplication of the form of a lattice $L$ by positive $m\in \bq$.
The lattices $L$ and $L(m)$ are called {\em similar.} The automorphism
groups of similar lattices are naturally identified, and the lattices are
reflective of any type simultaneously.

Any hyperbolic lattice $L$ of rank one is reflective since $O(L)=\{\pm 1\}$.
Any hyperbolic lattice $L$ of rank two is reflective if and only if 
either $L$ has an isotropic element (then $O(L)$ is finite), 
or $L$ has a negative root (equivalently, a negative reflection). 
It is known since 1981 that there exist only finite number
of reflective hyperbolic lattices $L$ of $\rk L\ge 3$, up to similarity. 
We had shown this in \cite{Nik3}, \cite{Nik4}
for a fixed $\rk L\ge 3$. Vinberg \cite{Vin1}, \cite{Vin2} had shown that
$\rk L<30$ for reflective hyperbolic lattices $L$.

We note that recently similar finiteness results were completed
for reflective hyperbolic lattices over all
totally real algebraic number fields together. For example, see
\cite{Nik10} and references there.

Unfortunately, we don't know similar finiteness results for
$(\pm)$ reflective and $(+)$ reflective hyperbolic lattices.

\subsection{K3 surfaces with reflective Picard lattices}
\label{K3refl}

Further we consider Tyurin's self-correspondences $\Tyu(H)$ of $X$
with integral action in $N(X)$, that is $s_H|N(X)\in O(N(X))$.
By our discussion in Sec. \ref{refl}, this is equivalent for $H$
to be multiple of a primitive root in $N(X)$,
equivalently, $2(H\cdot N(X))H/H^2\subset N(X)$.
Self-correspondences of $X$ with integral action in $N(X)$
can be viewed as similar to ones defined by graphs of automorphisms of
$X$ and by $(-2)$ roots of $N(X)$.
We call self-correspondence $\Tyu(H)$ positive (respectively negative) if
$H^2>0$ (respectively $H^2<0$). From \eqref{Tyurinac} and 
our considerations in Section \ref{corresp}, we get the following result.

\begin{theorem} Let $X$ be a  K3 surface over $\bc$.

 The action of  compositions of all self-correspondences of $X$ defined by
$(-2)$ roots of $N(X)$ and by Tyurin's self-correspondences
$\Tyu(H)$ with integral action in $N(X)$
generate the group $\{\pm 1\}W^{\pm}(N(X))$ up to $\{\pm 1\}$.
They generate a subgroup of finite index in $O(N(X))$ if and only if
$N(X)$ is $(\pm)$ reflective.

The action of  compositions of all self-correspondences of $X$ defined by
$(-2)$ roots of $N(X)$ and by Tyurin's self-correspondences
$\Tyu(H)$ with negative $H^2$ and integral action in $N(X)$
generate the group
$$
\{\pm 1\}W(N(X))=\{\pm 1\}W^{-}(N(X))
$$
up to $\{\pm 1\}$. They generate a subgroup of finite index in
$O(N(X))$ if and only if $N(X)$ is (classically) reflective.
\label{thcorrefl}
\end{theorem}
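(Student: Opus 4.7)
The plan is to derive both claims directly from the relation \eqref{Tyurinac} combined with the characterization of integrality given in the theorem statement and the definitions of the reflection subgroups in Section \ref{refl}, so the argument is essentially bookkeeping of which reflections enter which group.

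First I would unpack the generators. By \eqref{Tyurinac}, the action of each Tyurin self-correspondence $\Tyu(H)$ lies in the coset $\{\pm 1\}W^{(-2)}(N(X))\cdot s_H$, and the self-correspondence attached to a $(-2)$ root $\delta$ acts on $H^2(X,\bz)$ as $s_\delta$ (as recalled in Section \ref{corresp}). The integrality hypothesis $s_H|N(X)\in O(N(X))$ is, as noted at the start of Section \ref{K3refl}, equivalent to $H$ being a positive rational multiple of a primitive root of $N(X)$. Since $s_{cH}=s_H$ for any nonzero scalar $c$, we may, for the purpose of computing the group of actions in $N(X)$, replace each such $H$ by a primitive root of $N(X)$ of the same sign.

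Let $G$ denote the subgroup of $O(N(X))$ generated by the listed actions. From the previous step, $G$ is the subgroup generated by $\{\pm 1\}$, by $W^{(-2)}(N(X))$ (absorbing both the ambiguity in \eqref{Tyurinac} and the contribution of the $(-2)$-correspondences), and by the reflections $s_H$ with $H$ ranging over primitive roots of $N(X)$ of the allowed sign. Because every reflection in an arbitrary root of $N(X)$ equals $s_H$ for the primitivization $H$ of that root, these reflections already exhaust all reflections of the allowed sign, and $W^{(-2)}(N(X))\subset W^{-}(N(X))\subset W^{\pm}(N(X))$ is automatic. Hence, in the first claim (no sign restriction on $H^2$) we obtain $G=\{\pm 1\}W^{\pm}(N(X))$, and in the second claim ($H^2<0$) we obtain $G=\{\pm 1\}W^{-}(N(X))=\{\pm 1\}W(N(X))$, both equalities holding up to $\{\pm 1\}$ exactly as in the statement.

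The finite-index assertions are then immediate: since $\{\pm 1\}$ is finite, $G$ has finite index in $O(N(X))$ if and only if $W^{\pm}(N(X))$ (respectively $W(N(X))$) does, which is by definition of $(\pm)$-reflectivity (respectively classical reflectivity) from Section \ref{refl}. The only mild subtlety I anticipate is tracking the $\{\pm 1\}$ and $W^{(-2)}$ ambiguities consistently, but because both factors are absorbed into the target groups $\{\pm 1\}W^{\pm}(N(X))$ and $\{\pm 1\}W(N(X))$, this ambiguity does not obstruct the argument; there is no genuinely difficult step beyond invoking \eqref{Tyurinac} and the lattice-theoretic definitions.
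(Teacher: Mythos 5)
Your proposal is correct and follows essentially the same route as the paper, which derives Theorem \ref{thcorrefl} directly from \eqref{Tyurinac}, the action $s_\delta$ of the $(-2)$-root correspondences, and the equivalence of integrality of $s_H|N(X)$ with $H$ being a multiple of a primitive root; the $\{\pm 1\}$ and $W^{(-2)}(N(X))$ ambiguities are absorbed exactly as you describe, and the finite-index statements reduce to the definitions of $(\pm)$ and classical reflectivity.
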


By Piatetsky-Shapiro and Shafarevich \cite{PS} (this is an important
corollary of Global Torelli Theorem for K3 surfaces), we have
\begin{equation}
\label{PSShaf}
[O(N(X)):W^{(-2)}(N(X))\Aut X]<\infty,
\end{equation}
that is $W^{(-2)}(N(X))\Aut X$ is a subgroup of finite index in $O(N(X))$.
Here we identify $\Aut X$ with its action in $N(X)$. It has
a finite kernel. Moreover, $W^{(-2)}(N(X))\Aut X$ is the semi-direct
product of groups where $W^{(-2)}(N(X))$ is a normal subgroup.

Since by \cite{Nik3} and \cite{Nik4},
the number of similarity classes of reflective hyperbolic lattices
of a fixed rank at least 3 is finite, and $\rk N(X)\le 20$,
applying Theorem \ref{thcorrefl} and the result
\eqref{PSShaf} by Piatetsky-Shapiro and Shafarevich,
we obtain the following result. 

\begin{theorem} Let $X$ be a K3 surface over $\bc$.

By the result \eqref{PSShaf} by Piatetsky-Shapiro and Shafarevich \cite{PS},
the action in Picard lattice $N(X)$ of compositions of self-correspondences
of $X$ defined by all $(-2)$ roots of $N(X)$ and by graphs of automorphisms
of $X$ give a subgroup of finite index in $O(N(X))$.

In contrary, the action in Picard lattice $N(X)$ of compositions
of self-corres\-pondences of $X$ defined by all $(-2)$ roots of $N(X)$
and by all Tyurin's self-correspondences $Tyu(H)$ with $H^2<0$ and integral
action in $N(X)$ generate a subgroup of infinite index in $O(N(X))$ if and
only if $N(X)$ is not reflective. This is the case if
$\rk N(X)\ge 3$ and $N(X)$ is different from a finite number of
similarity classes of reflective hyperbolic lattices of rank $\ge 3$.

If $N(X)$ is reflective, then the action of
a finite number of $(-2)$ roots in $N(X)$ and a finite number of
Tyurin's self-correspondences $Tyu(H)$ with $H^2<0$ and integral
action in $N(X)$ generate
a subgroup of finite index of $O(N(X))$.
\label{mainref}
\end{theorem}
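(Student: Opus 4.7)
The plan is to assemble the three clauses from already-available ingredients: Theorem \ref{thcorrefl}, the Piatetsky-Shapiro/Shafarevich finite-index result \eqref{PSShaf}, and the finiteness theorems for reflective hyperbolic lattices due to Nikulin \cite{Nik3,Nik4} and Vinberg \cite{Vin1,Vin2}. Only the finite generation asserted in the third clause demands more than bookkeeping.

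The first clause is immediate: compositions of the two permitted kinds of self-correspondences act on $N(X)$ through the subgroup $W^{(-2)}(N(X))\Aut X$, and \eqref{PSShaf} says exactly that this subgroup has finite index in $O(N(X))$. For the second clause I would invoke Theorem \ref{thcorrefl}, which identifies the subgroup generated by the actions of $(-2)$-root correspondences and of negative integral Tyurin correspondences with $\{\pm 1\}W(N(X))$. Finite index in $O(N(X))$ is therefore equivalent to $W(N(X))$ having finite index, i.e.\ to $N(X)$ being classically reflective. The ``finite number of similarity classes'' statement then follows from the bound $\rk N(X)\le 20$ (the Picard lattice is a hyperbolic sublattice of the K3 lattice) combined with the Nikulin--Vinberg finiteness theorem in each rank $\ge 3$, together with the observation from Sec.\ \ref{refl} that reflectivity is automatic in ranks $1$ and $2$.

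For the third clause, assume $N(X)$ is reflective. The standard geometric reformulation of reflectivity is that the Coxeter polyhedron of $W(N(X))$ in the hyperbolic space of $N(X)\otimes\br$ has finite volume, and any finite-volume convex polyhedron in hyperbolic space has finitely many faces; hence there are finitely many primitive negative roots $\delta_1,\dots,\delta_n$ whose reflections generate $W(N(X))$. For each $\delta_i$ with $\delta_i^2=-2$ the reflection $s_{\delta_i}$ is directly the action of a $(-2)$-root self-correspondence. For each $\delta_i$ with $\delta_i^2\ne -2$, the vector $\delta_i$ is proportional to a primitive root of $N(X)$, so by the characterisation recalled at the start of Sec.\ \ref{K3refl} the correspondence $\Tyu(\delta_i)$ has integral action, and by \eqref{Tyurinac} it acts as $s_{\delta_i}$ modulo $\{\pm 1\}W^{(-2)}(N(X))$. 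Adjoining to the list the finitely many $(-2)$-roots needed to realise these correction terms then produces a finite collection of the two allowed types whose actions generate $\{\pm 1\}W(N(X))$; by reflectivity this has finite index in $O(N(X))$.

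The only non-bookkeeping step, and hence the main obstacle, is the geometric input in the third clause: the passage from ``$W(N(X))$ has finite index in $O(N(X))$'' to ``$W(N(X))$ is generated by reflections in finitely many walls''. This is standard in Vinberg's theory of arithmetic hyperbolic reflection groups, but it is the one ingredient not supplied by lattice-theoretic bookkeeping and must be cited rather than proved here.
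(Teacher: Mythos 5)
Your argument is essentially the paper's own: the first clause is read off from \eqref{PSShaf}, the second from Theorem \ref{thcorrefl} combined with the Nikulin--Vinberg finiteness of reflective hyperbolic lattices in each rank $\ge 3$ and $\rk N(X)\le 20$, and the third from the standard fact (which the paper also only cites, via the finiteness of the fundamental Coxeter polyhedron) that a reflective $W(N(X))$ is generated by reflections in the finitely many walls of its chamber, with Tyurin correspondences $\Tyu(\delta_i)$ realising the non-$(-2)$ wall reflections modulo $\{\pm 1\}W^{(-2)}(N(X))$. One minor slip worth correcting: reflectivity is \emph{not} automatic in rank $2$ (the paper requires an isotropic element or a negative root), but that side remark is never used since the theorem's second clause concerns only $\rk N(X)\ge 3$.
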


In the last statement, we use the well-known fact that the group
$W(N(X))$ is generated by a finite number of reflections in
negative roots of $N(X)$ if $N(X)$ is reflective.
This follows from the fact that the fundamental
chamber for $W(N(X))$ is a finite polyhedron in the hyperbolic space
determined by $N(X)$, the group $W(N(X))$ is generated by reflections in
negative roots which are perpendicular to codimension one faces of
this chamber. Their number is finite.

Unfortunately, we don't know finiteness for
$(\pm)$ reflective hyperbolic lattices of rank at least 3. This
raises an interesting question.

\begin{problem} Is the number of similarity classes of $(\pm)$
reflective hyperbolic lattices of rank at least 3 finite?

Here a hyperbolic lattice $S$ is $(\pm)$ reflective if the group $W^{\pm}(S)$
generated by reflections in  all positive and negative roots of
$S$ has finite index in $O(S)$.
\label{prplusminref}
\end{problem}

Theorem \ref{mainref} shows that to obtain a sufficient number
(with sufficiently arbitrary action in $N(X)$)
of self-correspondences of $X$ with integral action, one has to consider
all self-correspondences of $X$ generated by $(-2)$ roots,
Tyurin's self-correspondences $\Tyu(H)$ with integral action,
and graphs of automorphisms together.
This raises the following question which is interesting for
K3 surfaces with reflective and not reflective Picard lattices.

\begin{problem}
What is the kernel of the action in $N(X)$ of the group
of self-correspondences of $X$ generated by all $(-2)$ roots of $N(X)$,
$Tyu(H)$ with integral (or arbitrary)
action in $N(X)$, and graphs of automorphisms of $X$?
\label{prkernel}
\end{problem}


\

\

V.V. Nikulin \par Deptm. of Pure Mathem. The University of
Liverpool, Liverpool\par L69 3BX, UK; \vskip1pt Steklov
Mathematical Institute,\par ul. Gubkina 8, Moscow 117966, GSP-1,
Russia

vnikulin@liv.ac.uk \quad vvnikulin@list.ru

\end{document}